\documentclass[11pt]{amsart}

\usepackage[T1]{fontenc}
\usepackage[utf8]{inputenc}
\usepackage{amsmath,amssymb,amsthm,mathtools,mathrsfs}
\usepackage{graphicx}
\usepackage{hyperref}
\usepackage{caption}
\usepackage{subcaption}
\usepackage{float}
\usepackage{geometry}
\title{Random Knots via Stiefel manifolds}
\author{Alexander Kolpakov}
\author{Igor Rivin}
\date{\today}

\newtheorem{proposition}{Proposition}
\newtheorem{lemma}{Lemma}
\newtheorem{theorem}{Theorem}
\theoremstyle{remark}
\newtheorem{remark}{Remark}

\newcommand{\Ebb}{\mathbb{E}}
\newcommand{\Rbb}{\mathbb{R}}
\newcommand{\1}{\mathbf{1}}
\newcommand{\Kcal}{\mathcal{K}}
\newcommand{\row}{\operatorname{row}}
\newcommand{\Gr}{\operatorname{Gr}}
\newcommand{\St}{\operatorname{St}}

\begin{document}

\begin{abstract}
A fixed simplex, randomly projected into three dimensions and joined in
Hamiltonian order, produces a rich and unusually tractable model of random
stick knots.  We prove that Gaussian projections and Haar-random Stiefel
projections have exactly the same knot-type law, despite having different
metric shapes, and that at every stick budget the model gives positive
probability to precisely the knot types realizable with that many sticks.  Its
linear-algebraic structure yields an exact marginal distance law, an exact mean
planar-crossing count, and crossing concentration, while in the first
nontrivial six-stick case the complete tetrahedral sign pattern gives an exact
unknot-versus-handed-trefoil classifier for every generic sample.  The result
is a direct bridge from random projections and finite sign geometry to the
topology of random knots.
\end{abstract}

\maketitle

\section{Introduction and setup}

Assume \(N\ge4\).  Let \(e_1,\dots,e_N\) be the standard basis of \(\Rbb^N\), and set
\[
H:=\1^\perp\subset \Rbb^N,\qquad
\Pi_H:=I_N-\frac1N\1\1^T .
\]
For a matrix \(M\in\Rbb^{3\times N}\), write \(x_i:=Me_i\).  The Hamiltonian
cycle
\[
\mathcal C=(e_1,e_2,\dots,e_N,e_1)
\]
becomes the closed polygon
\[
\Gamma(M;\mathcal C)
:=
[x_1,x_2]\cup [x_2,x_3]\cup\cdots\cup[x_N,x_1]\subset\Rbb^3 .
\]
When this polygon is embedded, it is a polygonal, or stick, knot with \(N\) sticks.
Throughout, ambient isotopies start at the identity and hence preserve the orientation
of \(\Rbb^3\); a chiral knot and its mirror are distinct knot types unless we explicitly
merge mirror labels.  Knot types themselves are unoriented embedded subsets:
reversing the cyclic traversal changes no set \(\Gamma(M;\mathcal C)\) and hence
does not change its knot type.

All edge vectors \(e_i-e_{i+1}\) lie in \(H\).  Thus replacing \(M\) by
\(M\Pi_H\) only subtracts the centroid from the vertices and translates the polygon.  This is
the basic reason that the centered row space, rather than the absolute position of the
simplex vertices, controls the topological part of the construction.

We use two closely related models.
\begin{itemize}
\item[1.] Gaussian model: \(P\in\Rbb^{3\times N}\) has independent
entries distributed as \(\mathcal N(0,1/3)\), and the polygon is
\(\Gamma(P;\mathcal C)\).
\item[2.] Haar row-orthonormal model: \(Q\) is Haar distributed on
\[
\St_3(H):=\{Q\in\Rbb^{3\times N}:QQ^T=I_3,\ Q\1=0\},
\]
and the polygon is \(\Gamma(Q;\mathcal C)\).
\end{itemize}
The condition \(Q\1=0\) is the centering condition: if \(q_i\) is the \(i\)-th
column of \(Q\), then \(\sum_i q_i=0\).  Thus it is a linear dependence among
the columns, not among the three rows; \(QQ^T=I_3\) says the rows are
orthonormal, so \(Q|_H\) has rank three.
The first model is the direct Gaussian projection of the simplex.  The second retains only
the random three-dimensional shadow of the centered simplex and removes the singular-value
deformation of the Gaussian map.

\section{Relation to random-knot generation}

Many computational random-knot ensembles begin in \(\Rbb^3\): one samples lattice
self-avoiding polygons, equilateral or Gaussian polygons, confined polygons, or random-jump
polygons, and then studies the resulting knot statistics
\cite{SumnersWhittington,DiaoPippengerSumners,Diao,CantarellaDeguchiShonkwiler,EddyShonkwiler}.
Other ensembles are diagrammatic, such as Petaluma knots, grid diagrams, and random planar
diagrams \cite{EvenZoharModels,EvenZoharHassLinialNowikInvariants,
EvenZoharHassLinialNowikPetaluma,CantarellaChapmanMastin,ChapmanRandomDiagrams}.  The
projected-simplex ensemble differs from both families in a specific way.  The randomness
first chooses a three-dimensional linear shadow of a fixed high-dimensional simplex, and
only then reads off a polygon by a Hamiltonian ordering of the vertices.  In this respect the
closest geometric comparison is Westenberger's random-projection model, where a fixed curve
in high-dimensional Euclidean space is projected to random \(3\)-planes
\cite{WestenbergerRandomProjections}.  The computations below estimate Stiefel sector
probabilities by sampling, not by exact chamber-volume formulas.

Three features are relevant here.  First, the topological law has an exact Haar formulation
on \(\St_3(H)\).  Second, the Hamiltonian cycle is distributionally irrelevant: choosing
another cyclic ordering only relabels exchangeable simplex vertices.  Third, metric effects
are separated from topology: Gaussian projections add singular-value shear, while the
Haar row-orthonormal model removes it without changing knot type.

\section{Generic stick knots}

\begin{proposition}[Generic tameness]\label{prop:generic-tame}
If \(M\in\Rbb^{3\times N}\) has an absolutely continuous law, then
\(\Gamma(M;\mathcal C)\) is almost surely an embedded tame polygonal knot.  More precisely,
with probability one:
\[
x_i\ne x_j\ (i\ne j),\qquad
\text{no three distinct vertices are collinear,}
\]
and no two nonconsecutive edges intersect.
\end{proposition}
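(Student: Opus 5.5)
Each bad event will be shown to lie in a proper real algebraic subset of \(\Rbb^{3\times N}\cong\Rbb^{3N}\), or in a finite union of such subsets. Such sets have Lebesgue measure zero, so an absolutely continuous law gives them probability zero. The only tool needed is the standard fact that the zero set of a polynomial that is not identically zero is Lebesgue-null. That fact follows by induction on the number of variables via Fubini, or from the fact that a nonzero polynomial's zero set is a proper subvariety of dimension less than \(3N\).

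\emph{Distinct vertices and non-collinearity.} Since \(x_i-x_j=M(e_i-e_j)\), the event \(x_i=x_j\) is a codimension-\(3\) linear subspace of matrix space, so it is null. For three distinct indices \(i,j,k\), collinearity of \(x_i,x_j,x_k\) means \((x_j-x_i)\times(x_k-x_i)=0\). Each coordinate of this cross product is a polynomial in the entries of \(M\). These polynomials are not identically zero: take \(x_i=0\), \(x_j=(1,0,0)\), \(x_k=(0,1,0)\), which is possible because the columns of \(M\) can be chosen freely. Taking the finite union over all triples gives a null set.

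\emph{Nonconsecutive edges.} Take edges \([x_i,x_{i+1}]\) and \([x_j,x_{j+1}]\) with \(\{i,i+1\}\cap\{j,j+1\}=\emptyset\), indices taken mod \(N\). Because \(N\ge4\), the four indices are distinct. If the segments meet, the four points are coplanar, so
\[
D(M):=\det\bigl(x_{i+1}-x_i,\;x_j-x_i,\;x_{j+1}-x_i\bigr)=0.
\]
This determinant is a polynomial in the entries of \(M\). It is not identically zero, because the four columns can be set to \(0,e_1',e_2',e_3'\) (the standard basis of \(\Rbb^3\)), giving \(D=1\). The union over all finitely many nonconsecutive pairs is therefore null. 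Note that coplanarity is only a necessary condition for intersection, which is exactly what we need. Consecutive edges \([x_{i-1},x_i]\) and \([x_i,x_{i+1}]\) meet only at \(x_i\): if they shared a further point, \(x_{i-1},x_i,x_{i+1}\) would be collinear, which has already been excluded.

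\emph{Conclusion.} Off the null set, the vertices are distinct and the \(N\) closed segments are nondegenerate. Consecutive segments meet exactly at their shared endpoint, and nonconsecutive segments are disjoint. The union is then the image of an injective continuous map from a circle, made of \(N\) arcs glued cyclically. Since a continuous injection from a compact space to a Hausdorff space is an embedding, \(\Gamma(M;\mathcal C)\) is an embedded closed polygon. A polygonal knot is tame by definition, since it is PL. The main point needing care is the nonvanishing of each polynomial, which reduces to exhibiting one explicit configuration. This works because all indices involved are distinct and the columns of \(M\) are unconstrained, and \(N\ge4\) is exactly what guarantees the distinctness in the edge case.
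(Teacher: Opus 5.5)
Your proof is correct and takes essentially the same route as the paper. Each bad locus is a proper linear or polynomial set, shown proper by exhibiting one explicit configuration, and your coplanarity determinant \(D(M)\) agrees up to sign with the paper's triple product \(\tau_{ab}(M)=(d_1\times d_2)\cdot(x_b-x_a)\); your added remarks on consecutive edges and on the continuous-injection embedding fill in details the paper leaves implicit.
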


\begin{proof}
Coincidence of two vertices means \(M(e_i-e_j)=0\).  This is the vanishing of a
nonzero linear map of the entries of \(M\), hence a proper linear subspace of
\(\Rbb^{3N}\) and has Lebesgue measure zero.

For distinct \(i,j,k\), collinearity is equivalent to
\[
(x_i-x_k)\times (x_j-x_k)=0.
\]
The three coordinates of this cross product are polynomial functions of the entries of
\(M\).  They are not all identically zero: one may prescribe the three involved columns to
be \((0,0,0)\), \((1,0,0)\), and \((0,1,0)\).  Hence the collinearity locus is a
proper real-algebraic set and is null.

It remains to treat intersections of nonconsecutive edges.  Fix such a pair
\[
E_a=[x_a,x_{a+1}],\qquad E_b=[x_b,x_{b+1}].
\]
Let
\[
d_1=x_{a+1}-x_a,\qquad d_2=x_{b+1}-x_b.
\]
If the two segments intersect, then their supporting lines are coplanar, so
\[
\tau_{ab}(M):=\big(d_1\times d_2\big)\cdot(x_b-x_a)=0.
\]
This is a degree-three polynomial in the entries of \(M\).  It is not identically zero:
choose the four relevant columns to be
\[
x_a=(0,0,0),\quad x_{a+1}=(1,0,0),\quad
x_b=(0,1,0),\quad x_{b+1}=(0,0,1).
\]
Then \(d_1=(1,0,0)\), \(d_2=(0,-1,1)\), and
\[
\tau_{ab}=((1,0,0)\times(0,-1,1))\cdot(0,1,0)=-1.
\]
Thus \(\{\tau_{ab}=0\}\) is a proper algebraic hypersurface and is null.  Since there are
only finitely many vertex triples and edge pairs, the union of all bad events has probability
zero.  A polygonal embedded closed curve is tame, so the proposition follows.
\end{proof}

\section{Haar row factors and knot type}

The topological distribution of the Gaussian model is simpler than its metric distribution.
The singular values of the centered Gaussian map change lengths and angles, but they do not
change knot type.

\begin{proposition}[Gaussian polar decomposition]\label{prop:stiefel}
Let \(N\ge4\), and let \(P\in\Rbb^{3\times N}\) have independent centered Gaussian
entries with common variance \(\sigma^2>0\).  Set \(\widetilde P:=P\Pi_H\).  With probability
one, \(\widetilde P:H\to\Rbb^3\) has rank three, and its polar row factorization
\[
\widetilde P=AQ,\qquad
A:=(\widetilde P\widetilde P^T)^{1/2}\in GL^+(3,\Rbb),\qquad
Q\in\St_3(H),
\]
has \(Q\) Haar distributed on \(\St_3(H)\), independently of \(A\), and
\[
 \frac{A^2}{\sigma^2}\sim W_3(N-1,I_3).
\]
Here \(W_3(\nu,I_3)\) means the law of
\(\sum_{r=1}^{\nu}z_rz_r^T\) for independent \(z_r\sim\mathcal N_3(0,I_3)\).
The polygons
\(\Gamma(P;\mathcal C)\), \(\Gamma(\widetilde P;\mathcal C)\), and
\(\Gamma(Q;\mathcal C)\) have the same ambient isotopy class.
Consequently the knot type is independent of every measurable statistic of the
positive factor \(A\), including its unordered singular values and condition number.
\end{proposition}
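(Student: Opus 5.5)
I would begin by reducing the centred Gaussian matrix to a standard Gaussian matrix on \(H\). Fix an orthonormal basis of \(H\) and collect it as the columns of \(U\in\Rbb^{N\times(N-1)}\), so that \(U^TU=I_{N-1}\) and \(UU^T=\Pi_H\). Put \(G:=PU\in\Rbb^{3\times(N-1)}\). Rotation invariance of i.i.d.\ Gaussian rows gives that \(G\) has i.i.d.\ \(\mathcal N(0,\sigma^2)\) entries. It also gives \(\widetilde P=GU^T\) and \(\widetilde P\widetilde P^T=GG^T\).

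Since \(N-1\ge3\), \(G\) has full rank three almost surely: the locus \(\det(GG^T)=0\) is a proper algebraic set. Hence \(A=(GG^T)^{1/2}\) is positive definite, so it lies in \(GL^+(3,\Rbb)\). Moreover \(A^2/\sigma^2=(G/\sigma)(G/\sigma)^T\). Writing \(G/\sigma\) column by column as \(z_1,\dots,z_{N-1}\) gives exactly the definition of \(W_3(N-1,I_3)\).

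Now set \(Q:=A^{-1}\widetilde P=A^{-1}GU^T\). Then \(QQ^T=A^{-1}GG^TA^{-1}=I_3\) and \(Q\1=0\), because \(U^T\1=0\). So \(Q\in\St_3(H)\), and the map \(Q_0\mapsto Q_0U^T\) is an isometric identification of \(\St_3(\Rbb^{N-1})\) with \(\St_3(H)\).

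For the Haar law and the independence of \(A\) and \(Q_0:=A^{-1}G\), I would use the standard invariance argument. For \(O\in O(N-1)\), the matrix \(GO\) has the same law as \(G\). Also \((GO)(GO)^T=GG^T\), so the map \(G\mapsto GO\) leaves \(A\) unchanged and sends \(Q_0\) to \(Q_0O\). Therefore \((A,Q_0O)\overset{d}{=}(A,Q_0)\) for every \(O\).

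It follows that, conditional on \(A\), the law of \(Q_0\) is invariant under the transitive right action of \(O(N-1)\) on \(\St_3(\Rbb^{N-1})\). By uniqueness of the invariant probability measure on this homogeneous space, this conditional law is Haar and does not depend on \(A\). This is the step needing the most care. The cleanest way to make "conditional on \(A\)" rigorous is to test against \(f(A)g(Q_0)\) and average over Haar-random \(O\): for each fixed \(q\), the average \(\int g(qO)\,dO\) equals the Haar integral of \(g\). That gives the factorisation directly.

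For the topological claim, I would treat two deformations separately.
\begin{itemize}
\item[1.] \(\Gamma(\widetilde P;\mathcal C)\) is a translate of \(\Gamma(P;\mathcal C)\), by the centroid subtraction noted in the introduction. A translation is isotopic to the identity through translations.
\item[2.] \(\Gamma(\widetilde P;\mathcal C)=A\,\Gamma(Q;\mathcal C)\), since linear maps send segments to segments. The positive-definite factor is joined to the identity by the path \(A_t=(1-t)I+tA\), which stays positive definite, and each \(A_t\) is a linear automorphism of \(\Rbb^3\).
\end{itemize}
Thus \(t\mapsto A_t\) is an ambient isotopy starting at the identity. Linear automorphisms preserve embeddedness, and by Proposition~\ref{prop:generic-tame} (or just injectivity of \(A\)) the embedding is preserved throughout. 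Hence all three polygons have the same ambient isotopy class, and the orientation is preserved because \(\det A>0\).

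Finally, the knot type is a measurable function of \(Q\) alone, and \(Q\) is independent of \(A\). It is therefore independent of any measurable statistic of \(A\), including its singular values and condition number.
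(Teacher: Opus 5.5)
Your proposal is correct and follows the same route as the paper. You reduce to a \(3\times(N-1)\) i.i.d.\ Gaussian matrix through an orthonormal basis of \(H\), set \(Q=A^{-1}\widetilde P\), and move the polygon by a translation and the positive-definite linear map \(A\); you also make self-contained the steps the paper cites (Muirhead, density disintegration) or states in one line, namely Haar law and independence by averaging over the right \(O(N-1)\)-action, the Wishart law read off from the columns of \(G/\sigma\), and the explicit isotopy \(A_t=(1-t)I+tA\), all of which are valid.
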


\begin{proof}
Choose an isometry \(B:\Rbb^{N-1}\to H\) and write \(G=\widetilde P B\).  Then \(G\) is
a \(3\times(N-1)\) matrix of independent \(\mathcal N(0,\sigma^2)\) entries.  The standard
Gaussian polar decomposition \cite{Muirhead} gives
\[
 G=A U,\qquad A=(GG^T)^{1/2},\qquad U=A^{-1}G\in\St_3(\Rbb^{N-1}),
\]
where \(U\) is Haar, \(U\) is independent of \(A\), and
\(GG^T/\sigma^2\sim W_3(N-1,I_3)\).  One way to see the independence is to
disintegrate the Gaussian density, whose value depends only on \(GG^T\), under the
right action of \(O(N-1)\).  Since \(Q=UB^T\), the asserted Haar law and independence
follow.

Now \(A\) is positive definite and \(Q=A^{-1}\widetilde P\) has orthonormal rows in \(H\).
The centered vertices satisfy
\[
\widetilde Pe_i=A(Qe_i),\qquad i=1,\dots,N.
\]
Thus \(\Gamma(\widetilde P;\mathcal C)\) is the image of
\(\Gamma(Q;\mathcal C)\) under the orientation-preserving linear diffeomorphism
\(A:\Rbb^3\to\Rbb^3\).  Finally, \(\Gamma(P;\mathcal C)\) is a translate of
\(\Gamma(\widetilde P;\mathcal C)\).  Translations and orientation-preserving linear
diffeomorphisms preserve ambient isotopy class.
\end{proof}

\begin{remark}[Grassmannian shadow and chirality]
The row space \(\row(Q)\) is Haar distributed on \(\Gr(3,H)\).  A row space determines the
polygon only up to a left action of \(O(3)\): orientation-preserving choices give the same
chiral knot type, while an orientation-reversing choice gives the mirror.  Thus the full
chiral law is most naturally a Haar law on \(\St_3(H)\).  For metric observables and for
knot labels that identify mirror images, the law descends to the Grassmannian.
\end{remark}

\begin{proposition}[Distributional invariance across cycles]\label{prop:cycle-invariance}
Let \(\mathcal C\) and \(\mathcal C'\) be Hamiltonian cycles on \(\{1,\dots,N\}\).  Under
either the Gaussian model or the Haar row-orthonormal model, the corresponding random
polygons built from \(\mathcal C\) and \(\mathcal C'\) have the same distribution.
\end{proposition}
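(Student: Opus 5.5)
Both models are invariant under right multiplication by permutation matrices, and every Hamiltonian cycle is the image of any other under a vertex relabeling; I would combine these two facts.

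First, I fix a permutation \(\pi\) of \(\{1,\dots,N\}\) carrying \(\mathcal C\) to \(\mathcal C'\). Such a \(\pi\) exists because any Hamiltonian cycle is a cyclic listing \((e_{\pi(1)},\dots,e_{\pi(N)},e_{\pi(1)})\) of all vertices. Let \(S_\pi\in O(N)\) be the permutation matrix with \(S_\pi e_i=e_{\pi(i)}\). For any \(M\in\Rbb^{3\times N}\), the \(i\)-th column of \(MS_\pi\) is \(Me_{\pi(i)}\). Therefore
\[
\Gamma(M;\mathcal C')=\Gamma(MS_\pi;\mathcal C)
\]
as subsets of \(\Rbb^3\), traversed in the same cyclic order. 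If \(\mathcal C'\) is given with the opposite orientation, the underlying set is unchanged, since knot types and the polygon sets do not depend on traversal direction.

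Second, I check that each law is invariant under \(M\mapsto MS_\pi\).
\begin{itemize}
\item In the Gaussian model, \(PS_\pi\) just permutes the columns of a matrix with i.i.d.\ \(\mathcal N(0,1/3)\) entries, so \(PS_\pi\overset{d}{=}P\).
\item In the Haar model, \(S_\pi\1=\1\), so \(S_\pi\) preserves \(H\) and restricts to an orthogonal map of \(H\). For \(Q\in\St_3(H)\) we have \((QS_\pi)(QS_\pi)^T=QQ^T=I_3\) and \(QS_\pi\1=Q\1=0\), so right multiplication by \(S_\pi\) maps \(\St_3(H)\) to itself.
\item This map is the action of an element of \(O(H)\) on \(\St_3(H)\). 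Haar measure on \(\St_3(H)\) is by definition the unique probability measure invariant under the transitive right action of \(O(H)\). Hence \(QS_\pi\overset{d}{=}Q\).
\end{itemize}

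Combining the two steps, \(\Gamma(M;\mathcal C')=\Gamma(MS_\pi;\mathcal C)\overset{d}{=}\Gamma(M;\mathcal C)\) in either model. This holds as an equality of laws of random closed polygons, i.e.\ of the random vertex tuples read in cyclic order. So every measurable functional agrees, in particular the knot type, which is measurable on the full-measure generic set of Proposition~\ref{prop:generic-tame}.

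The only point needing care is that the Haar measure on \(\St_3(H)\) is invariant under the particular subgroup element \(S_\pi|_H\). I would handle it by the uniqueness and invariance characterization above. As a cross-check, one can use Proposition~\ref{prop:stiefel}: \(P\Pi_H S_\pi=PS_\pi\Pi_H\), since \(S_\pi\) commutes with \(\Pi_H\). The polar factor of \(PS_\pi\Pi_H\) is \(AQS_\pi\), so the Haar law of \(Q\) transfers to \(QS_\pi\).
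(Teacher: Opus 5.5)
Your proof is correct and follows essentially the same route as the paper. In both, a permutation matrix relabels \(\mathcal C\) into \(\mathcal C'\); the Gaussian law is invariant by column exchangeability; and Haar measure on \(\St_3(H)\) is invariant under \(Q\mapsto QS_\pi\) because \(S_\pi\) preserves \(H\). Your explicit identity \(\Gamma(M;\mathcal C')=\Gamma(MS_\pi;\mathcal C)\) and the polar-decomposition cross-check spell out details the paper leaves implicit.
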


\begin{proof}
In the Gaussian model the columns are exchangeable, so relabeling the vertices by any
permutation does not change their joint distribution.  Since any Hamiltonian cycle can be
sent to any other by a permutation of labels, the polygon distributions agree.

In the Haar row-orthonormal model, a permutation matrix \(\Pi\in O(N)\) sends one cyclic
ordering to the other and preserves \(H=\1^\perp\).  Haar measure on \(\St_3(H)\) is
invariant under \(Q\mapsto Q\Pi\).  The same relabeling argument gives the result.
\end{proof}

\begin{proposition}[Mirror symmetry]\label{prop:mirror}
Under either model, the probabilities of a knot type \(K\) and its mirror
\(mK\) are equal.
\end{proposition}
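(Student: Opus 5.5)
The plan is to exhibit a measure-preserving involution of each model that reflects every polygon in a fixed plane. Let \(R:=\operatorname{diag}(-1,1,1)\in O(3)\), the reflection of \(\Rbb^3\) in the plane \(\{x^1=0\}\). For any \(M\in\Rbb^{3\times N}\), the vertices of \(\Gamma(RM;\mathcal C)\) are \(RMe_i=Rx_i\). Because \(R\) is linear, it maps each segment \([x_i,x_{i+1}]\) onto \([Rx_i,Rx_{i+1}]\). Hence
\[
\Gamma(RM;\mathcal C)=R\bigl(\Gamma(M;\mathcal C)\bigr),
\]
and whenever \(\Gamma(M;\mathcal C)\) is embedded, \(\Gamma(RM;\mathcal C)\) is embedded as well. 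Its knot type is the mirror of that of \(\Gamma(M;\mathcal C)\). This is the standard fact that a knot's image under any orientation-reversing homeomorphism of \(\Rbb^3\) represents \(mK\), and a reflection is one such homeomorphism.

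Next I check that \(M\mapsto RM\) preserves each law. In the Gaussian model, \(RP\) is \(P\) with its first row negated. The entries of \(RP\) are therefore still independent \(\mathcal N(0,1/3)\) variables, so \(RP\overset{d}{=}P\). In the Haar model, left multiplication by \(R\) maps \(\St_3(H)\) to itself: \((RQ)(RQ)^T=RR^T=I_3\), and \(RQ\1=0\). Haar measure on \(\St_3(H)\) is invariant under the left \(O(3)\) action, so \(RQ\overset{d}{=}Q\). To see this invariance without extra machinery, use Proposition~\ref{prop:stiefel}. There \(Q=A^{-1}\widetilde P\), and we have \(R\widetilde P=(RAR^T)(RQ)\), where \(RAR^T\) is again positive definite. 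Since \(R\widetilde P\overset{d}{=}\widetilde P\), uniqueness of the polar factorization gives \(RQ\overset{d}{=}Q\).

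Finally, let \(\mathcal E\) be the generic set of Proposition~\ref{prop:generic-tame}, which has full measure. Define \(S_K:=\{M\in\mathcal E:\Gamma(M;\mathcal C)\text{ has type }K\}\). The first step shows that \(M\mapsto RM\) maps \(S_K\) bijectively onto \(S_{mK}\): the map is an involution, and \(m(mK)=K\). Since the map preserves each law, \(\Pr(S_K)=\Pr(S_{mK})\).

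The step that needs some care is measurability of \(S_K\). Knot type is locally constant on the open set of embedded configurations, because a small perturbation of an embedded polygon is isotopic to it. So each \(S_K\) is open in \(\mathcal E\), and in particular Borel. Everything else is a direct verification.
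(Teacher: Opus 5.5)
Your proof is correct and follows the same route as the paper. Left multiplication $M\mapsto RM$ by a fixed orientation-reversing $R\in O(3)$ preserves both the Gaussian law and Haar measure on $\St_3(H)$ while reflecting each polygon, so it carries the $K$-sector onto the $mK$-sector; your derivation of left $O(3)$-invariance from the polar factorization and your Borel check of the sectors only spell out what the paper takes from Propositions~\ref{prop:stiefel} and~\ref{prop:knot-type-law}.
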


\begin{proof}
Fix \(R\in O(3)\) with \(\det R=-1\).  Left multiplication \(Q\mapsto RQ\)
preserves Haar measure on \(\St_3(H)\), and the resulting polygon is the image
of the original polygon under an orientation-reversing linear isometry.
It therefore carries the mirror knot type.  The Gaussian law is likewise
invariant under \(P\mapsto RP\), or follows from
Proposition~\ref{prop:stiefel}.
\end{proof}

\begin{proposition}[Well-defined knot-type law]\label{prop:knot-type-law}
Under either model, the random stick knot has an almost surely well-defined ambient isotopy
class.  The resulting probability measure on the countable set of polygonal knot types is
Borel measurable and does not depend on the Hamiltonian cycle.
\end{proposition}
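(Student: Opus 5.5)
The plan is to combine Proposition~\ref{prop:generic-tame} with a local-constancy argument for knot type on the generic set. The main work is measurability. Let \(U\subset\Rbb^{3\times N}\) be the set of matrices \(M\) for which the conditions of Proposition~\ref{prop:generic-tame} hold: distinct vertices, no three collinear, and no two nonconsecutive edges meeting. The complement of \(U\) is a finite union of closed sets, namely zero sets of continuous maps, or images of compact parameter sets such as \(\{(s,t)\in[0,1]^2\}\) under the condition that the corresponding edge points coincide. So \(U\) is open. By Proposition~\ref{prop:generic-tame}, \(U\) has full measure under any absolutely continuous law, and in particular under the Gaussian model. For the Haar model on \(\St_3(H)\), which is a null set in \(\Rbb^{3N}\), I would not apply that proposition directly. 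Instead I would push forward through Proposition~\ref{prop:stiefel}. The Gaussian \(P\) lies in \(U\) almost surely, and \(\Gamma(P;\mathcal C)\) is an affine, orientation-preserving image of \(\Gamma(Q;\mathcal C)\). Moreover, membership in \(U\) is invariant under translations and invertible linear maps. Hence \(Q\in U\) almost surely. On \(U\) the ambient isotopy class \(\kappa(M)\) of \(\Gamma(M;\mathcal C)\) is therefore defined almost surely.

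Next, I would show that \(\kappa\) is locally constant on \(U\). Fix \(M\in U\). Since the embedded polygon has a positive minimal distance \(\delta\) between nonadjacent edges, and adjacent edges meet at a positive angle by noncollinearity, any \(M'\) in a small neighborhood lies in \(U\). Moreover, the straight-line homotopy \(M_t=(1-t)M+tM'\) stays in that convex neighborhood. This gives an isotopy of embedded polygons. A piecewise-linear isotopy of polygons extends to an ambient isotopy of \(\Rbb^3\) starting at the identity; I would cite the standard PL isotopy extension, or equivalently note that small vertex moves are compositions of triangle (\(\Delta\)-) moves. So \(\kappa\) is constant on each connected component of the open set \(U\). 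There are countably many components, each open and hence Borel. Each fiber \(\kappa^{-1}(K)\) is then a countable union of open sets, hence Borel. The law of \(\kappa\) is the pushforward measure on the countable set of polygonal knot types with the discrete \(\sigma\)-algebra.

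Independence from the cycle then follows from Proposition~\ref{prop:cycle-invariance}. A permutation \(\Pi\) carrying \(\mathcal C\) to \(\mathcal C'\) satisfies \(\Gamma(M;\mathcal C')=\Gamma(M\Pi;\mathcal C)\), and the laws of \(M\) and \(M\Pi\) agree. So the pushforward knot-type laws coincide. The same identity shows that \(U\) for \(\mathcal C'\) is \(U\Pi^{-1}\), so the generic sets match. The step I expect to be the main obstacle is the rigorous passage from a small straight-line vertex homotopy to an ambient isotopy. I would handle it by moving one vertex at a time. Each single-vertex move sweeps two triangles that, by the \(\delta\)-separation, meet the rest of the polygon only along the adjacent edges. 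This is exactly an elementary \(\Delta\)-move, which is realized by an ambient isotopy supported near the triangles.
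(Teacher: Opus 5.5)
Your proposal is correct and follows essentially the same route as the paper. The shared steps are: almost-sure genericity transferred from the Gaussian to the Haar model through the polar factor, local constancy of knot type on an open full-measure set (which gives Borel fibers), and Proposition~\ref{prop:cycle-invariance} for independence of the cycle.

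The differences are minor. You work on the strictly generic open set \(U\subset\Rbb^{3\times N}\) of Proposition~\ref{prop:generic-tame} rather than the full embedded set \(\mathcal E\subset\St_3(H)\). You also sketch the stability step by single-vertex \(\Delta\)-moves where the paper simply cites PL stability \cite{EdwardsKirby}. That sketch needs a generic intermediate vertex position, because for special directions of the vertex displacement the two swept triangles can overlap or touch an incident edge.
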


\begin{proof}
For the Gaussian model, Proposition~\ref{prop:generic-tame} gives an embedded tame polygon
almost surely.  Let \(Q\) be the polar row factor of the centered Gaussian matrix.  By
Proposition~\ref{prop:stiefel}, \(Q\) is Haar distributed on \(\St_3(H)\), and
\(\Gamma(Q;\mathcal C)\) is related to the centered Gaussian polygon by an invertible linear
map.  Thus the Haar row-orthonormal polygon is embedded almost surely as well.

Let \(\mathcal E\subset\St_3(H)\) be the set of frames giving embedded polygons.  The
preceding paragraph shows that \(\St_3(H)\setminus\mathcal E\) is Haar null.  On
\(\mathcal E\), the vertex coordinates depend continuously on the frame.  Standard
stability of PL embeddings implies that \(\mathcal E\) is open and that sufficiently
nearby polygonal embeddings are ambient isotopic \cite{EdwardsKirby}.  This formulation
also covers an embedded polygon having a deliberately inserted collinear subdivision
vertex; no noncollinearity margin is required.

Knot type is therefore locally constant on this open embedded-polygon set.  Thus the map
from \(\mathcal E\) to the set of knot
types is locally constant and hence Borel measurable.  Assigning any fixed knot type on the
null set \(\St_3(H)\setminus\mathcal E\) gives a measurable map on all of \(\St_3(H)\).
Proposition~\ref{prop:stiefel} transfers this law to the Gaussian model, and
Proposition~\ref{prop:cycle-invariance} gives independence of the Hamiltonian cycle.
\end{proof}

\section{Knot-type sectors and Stiefel probabilities}

Let \(\Delta_N\subset\St_3(H)\) be the discriminant of frames for which
\(\Gamma(Q;\mathcal C)\) is not an embedded polygon.  The preceding section shows that
\(\Delta_N\) is Haar null and that the knot-type map
\[
    \tau_N:\St_3(H)\setminus\Delta_N\longrightarrow \Kcal
\]
is locally constant, where \(\Kcal\) denotes the countable set of polygonal knot types.
Thus every connected component, or chamber, of
\(\St_3(H)\setminus\Delta_N\) carries a single knot type.  The converse need not hold:
many chambers can carry the same knot type.  The natural one-to-one object is the
knot-type sector
\[
    \mathcal S_N(K):=\tau_N^{-1}(K),
\]
which is generally a disconnected union of chambers.

Define the Stiefel knot probabilities by
\[
    p_N(K)
    :=
    \mu_{\mathrm{Haar}}\{Q\in\St_3(H):\Gamma(Q;\mathcal C)\text{ has knot type }K\}.
\]
Equivalently, after assigning any fixed value on the null set \(\Delta_N\),
\[
    p_N(K)=\mu_{\mathrm{Haar}}\bigl(\mathcal S_N(K)\bigr).
\]

\begin{proposition}[Support of the Stiefel knot law]\label{prop:stiefel-support}
For every knot type \(K\),
\[
    p_N(K)>0
    \quad\Longleftrightarrow\quad
    \operatorname{stick}(K)\le N.
\]
In particular, \(p_N(K)=0\) if \(\operatorname{stick}(K)>N\), and
\[
    \sum_{K\in\Kcal}p_N(K)=1.
\]
The support is finite.
\end{proposition}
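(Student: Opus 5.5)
Both directions of the equivalence are handled separately below, followed by the finiteness and total-mass claims.

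For the forward direction, suppose \(p_N(K)>0\). Then \(\mathcal S_N(K)\setminus\Delta_N\) is nonempty, so some frame \(Q\) gives an embedded \(N\)-stick polygon \(\Gamma(Q;\mathcal C)\) of type \(K\). By definition of the stick number, \(\operatorname{stick}(K)\le N\). The degenerate case where some consecutive vertices are collinear is not excluded on the positive-measure set, but it costs nothing: a polygon with collinear consecutive edges merges to one with fewer sticks, so the inequality still holds.

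For the converse, suppose \(\operatorname{stick}(K)=s\le N\). Pick an embedded polygon with \(s\) sticks realizing \(K\), with vertices \(y_1,\dots,y_s\in\Rbb^3\). If \(s<N\), insert \(N-s\) extra vertices in the interiors of edges. This gives an embedded \(N\)-gon \(y_1,\dots,y_N\) of type \(K\); as noted in the proof of Proposition~\ref{prop:knot-type-law}, collinear subdivision vertices are harmless. Translate so that \(\sum y_i=0\), and apply a small generic perturbation, which preserves the knot type by the stability of PL embeddings. After this the \(3\times N\) matrix \(Y=(y_1|\cdots|y_N)\) has rank three; for \(N\ge4\) this holds once the points are not coplanar, and a small perturbation guarantees it. Then \(Y\1=0\), so the row space of \(Y\) is a \(3\)-plane in \(H\). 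Take the polar factorization \(Y=AQ_0\) with \(A=(YY^T)^{1/2}\) positive definite and \(Q_0\in\St_3(H)\). Since \(A\) is an orientation-preserving linear map, \(\Gamma(Q_0;\mathcal C)=A^{-1}\Gamma(Y;\mathcal C)\) is embedded of type \(K\), exactly as in the proof of Proposition~\ref{prop:stiefel}. By Proposition~\ref{prop:knot-type-law}, the embedded set \(\mathcal E\) is open and \(\tau_N\) is locally constant, so a neighborhood \(U\) of \(Q_0\) in \(\St_3(H)\) lies in \(\mathcal S_N(K)\). Haar measure on the homogeneous manifold \(\St_3(H)\) has full support, so \(\mu_{\mathrm{Haar}}(U)>0\) and \(p_N(K)>0\).

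It remains to prove the finiteness and total-mass claims. For each \(N\) there are only finitely many knot types of stick number at most \(N\). One way to see this is that an \(N\)-stick polygon has a generic projection with at most \(\binom{N}{2}\) crossings, and there are finitely many such diagrams. Hence the support is finite. Since \(\Delta_N\) is null and \(\tau_N\) is defined on its complement, \(\sum_K p_N(K)=\mu_{\mathrm{Haar}}(\St_3(H)\setminus\Delta_N)=1\), and the sum is countable additivity over the disjoint sectors. The main obstacle is the rank-three and centering step: one must check that every embedded configuration can be made centered and of full rank without changing its knot type. Translation handles the centering and a small perturbation handles the rank, and both are covered by the stability result already cited.
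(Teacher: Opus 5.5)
Your proof is correct and follows the paper's argument step for step. The forward direction comes from an embedded representative. The converse goes through subdivision, centering, perturbation to full rank, the polar normalization \(Q=(YY^T)^{-1/2}Y\), stability of embeddings, and full support of Haar measure. Finiteness comes from a crossing bound on a generic projection; your \(\binom N2\) is cruder than the paper's \(N(N-3)/2\) but suffices.

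One cosmetic point: perturb first and center afterwards (or re-center after perturbing), since a perturbed centered configuration need not stay centered. Rank three of the centered matrix just means the vertices are not coplanar, so nothing else changes.
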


\begin{proof}
If \(Q\in\St_3(H)\setminus\Delta_N\) represents \(K\), then its columns form an embedded
polygon with \(N\) sticks.  Hence \(\operatorname{stick}(K)\le N\), proving
\(p_N(K)=0\) for \(\operatorname{stick}(K)>N\).

Conversely, suppose \(\operatorname{stick}(K)\le N\).  Choose a polygonal representative
of \(K\) with at most \(N\) sticks and subdivide edges if necessary to obtain exactly \(N\)
labeled vertices.  After an arbitrarily small ambient isotopy, assume that the centered
coordinate rows span a three-dimensional subspace of \(H\).  Let
\(M\in\Rbb^{3\times N}\) be the centered vertex matrix, so \(M\1=0\) and \(MM^T\) is
positive definite.  Put
\[
    Q=(MM^T)^{-1/2}M .
\]
Then \(Q\in\St_3(H)\), and the original centered polygon is the image of the \(Q\)-polygon
under an orientation-preserving linear diffeomorphism of \(\Rbb^3\).  Thus \(Q\)
represents \(K\).

Embedded polygonal knots are stable under sufficiently small perturbations of their
vertices.  Therefore a nonempty open neighborhood of \(Q\) in \(\St_3(H)\) is contained in
\(\mathcal S_N(K)\).  Since Haar measure has full support on the compact smooth manifold
\(\St_3(H)\), this open set has positive measure, and \(p_N(K)>0\).

Finally, the map \(\tau_N\) is Borel measurable by Proposition~\ref{prop:knot-type-law},
the discriminant is null, and the countable sectors partition the embedded part of
\(\St_3(H)\).  Hence the probabilities sum to one.

For finiteness, take a generic planar projection of an \(N\)-stick polygon.
Only non--incident edge pairs can cross, so its crossing number is at most
\(N(N-3)/2\).  There are only finitely many knot diagrams with a bounded
number of crossings, and hence only finitely many knot types in the support.
\end{proof}

\begin{remark}[Stick-number data]
For tabulated exact values and bounds on \(\operatorname{stick}(K)\), we use
the KnotInfo stick-number table \cite{KnotInfoStick}; see also the recent
computational updates in
\cite{EddyShonkwiler,CantarellaRechnitzerSchumacherShonkwiler}.
\end{remark}

\begin{remark}[Topological sectors]
The discriminant decomposition is useful for language: knot invariants are constant on each
chamber, and \(p_N(K)\) is the Haar measure of the union of chambers carrying type \(K\).
For \(N\le 5\), stick number leaves only the unknot; at \(N=6\), the only possible
nontrivial type is the trefoil.
\end{remark}

\section{Metric normalizations}

The metric facts below fix the scale of the experiments.  The knot-type equivalence between
the Gaussian and Haar models does not require a distortion estimate.

\begin{lemma}[Raw Gaussian scale]\label{lem:raw-distance-law}
Let \(P\in\Rbb^{3\times N}\) have independent \(\mathcal N(0,1/3)\) entries.  For
every \(i\ne j\),
\[
\Ebb\|P(e_i-e_j)\|^2=2 .
\]
In particular, the Gaussian normalization preserves the squared simplex distance
\(\|e_i-e_j\|^2=2\) in expectation.
\end{lemma}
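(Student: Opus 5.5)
The plan is a direct second-moment computation, with no appeal to the earlier propositions. Fix \(i\ne j\) and write \(v:=e_i-e_j\in\Rbb^N\). Then \(P v=x_i-x_j\), and its \(r\)-th coordinate is \((Pv)_r=\sum_k P_{rk}v_k=P_{ri}-P_{rj}\). For each fixed row \(r\), the entries \(P_{ri}\) and \(P_{rj}\) are independent \(\mathcal N(0,1/3)\) variables because \(i\ne j\). Their difference is therefore centered Gaussian with variance \(1/3+1/3=2/3\), so \(\Ebb (Pv)_r^2=2/3\).

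Summing over the three rows by linearity of expectation gives
\[
\Ebb\|P(e_i-e_j)\|^2=\sum_{r=1}^3\Ebb(P_{ri}-P_{rj})^2=3\cdot\tfrac23=2 .
\]
Independence across rows is not needed, only linearity. Equivalently, one can write \(\Ebb\|Pv\|^2=\operatorname{tr}\big(\Ebb[P^TP]\big)\)-weighted form \(v^T\Ebb[P^TP]v\). Since \(\Ebb[P^TP]=3\cdot\tfrac13 I_N=I_N\), this equals \(\|v\|^2=2\), which also explains why the variance \(1/3\) is the natural normalization.

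For the final sentence of the statement, it only remains to note that \(\|e_i-e_j\|^2=1+1=2\), which matches the expectation just computed. I expect no real obstacle here; the only point to state carefully is that \(i\ne j\) is what makes the two entries in each row independent and prevents the cross term from cancelling.
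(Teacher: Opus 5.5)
Your proof is correct and is essentially the paper's argument: the paper observes that the columns \(x_i,x_j\) are independent centered Gaussians with covariance \(\tfrac13 I_3\), so \(x_i-x_j\) has covariance \(\tfrac23 I_3\) and trace \(2\), which is exactly your coordinatewise variance computation summed over the three rows. Your aside \(\Ebb[P^TP]=I_N\) is also correct, though the sentence introducing it is garbled. Also, the cross term \(-2\,\Ebb[P_{ri}P_{rj}]\) simply vanishes by independence, rather than being ``prevented from cancelling.''
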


\begin{proof}
The \(i\)th and \(j\)th columns of \(P\) are independent centered Gaussian vectors with
covariance \((1/3)I_3\).  Their difference is centered Gaussian with covariance
\((2/3)I_3\).  The trace of this covariance matrix is \(2\), which is
\(\Ebb\|P(e_i-e_j)\|^2\).
\end{proof}

\begin{proposition}[Haar row-orthonormal scale]\label{prop:haar-scale}
Let \(Q\in\St_3(H)\).  If \(y_i:=Qe_i\), then \(\sum_i y_i=0\) and
\[
\frac{1}{\binom N2}\sum_{1\le i<j\le N}\|y_i-y_j\|^2
=\frac{6}{N-1}.
\]
Equivalently, the all-pair RMS distance is exactly
\[
\sqrt{\frac{2\cdot3}{N-1}}.
\]
\end{proposition}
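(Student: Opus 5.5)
The plan is to reduce everything to trace identities for \(Q\), using the defining relations \(QQ^T=I_3\) and \(Q\1=0\) of \(\St_3(H)\). This is a deterministic statement: it holds for every \(Q\in\St_3(H)\), not just on average, so no probabilistic input from Proposition~\ref{prop:stiefel} is needed.

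First, the centering \(\sum_i y_i=Q\1=0\) is immediate from the definition of \(\St_3(H)\).

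For the distance identity I would use the standard expansion of the pairwise sum of squared distances:
\[
\sum_{1\le i<j\le N}\|y_i-y_j\|^2=\frac12\sum_{i,j}\|y_i-y_j\|^2
= N\sum_i\|y_i\|^2-\Bigl\|\sum_i y_i\Bigr\|^2 .
\]
The cross term vanishes by centering. What remains is \(N\sum_i\|y_i\|^2=N\,\operatorname{tr}(Q^TQ)=N\,\operatorname{tr}(QQ^T)=3N\).

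Dividing by \(\binom N2=N(N-1)/2\) gives \(6/(N-1)\), and taking the square root gives the stated RMS value \(\sqrt{2\cdot 3/(N-1)}\).

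There is no real obstacle here. The only point needing care is the bookkeeping in the symmetric-sum identity, in particular the factor \(\tfrac12\) and the fact that the diagonal terms \(i=j\) contribute zero. An equivalent formulation I could use instead writes the sum as \(\operatorname{tr}(Q L Q^T)\), where \(L=NI_N-\1\1^T\) is the Laplacian of the complete graph. Since \(Q\1=0\), this gives \(N\operatorname{tr}(QQ^T)=3N\) directly.
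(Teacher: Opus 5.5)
Your proposal is correct and follows the paper's proof essentially verbatim: centering from \(Q\1=0\), the identity \(\sum_{i<j}\|y_i-y_j\|^2=N\sum_i\|y_i\|^2-\|\sum_i y_i\|^2\), and \(\sum_i\|y_i\|^2=\operatorname{tr}(QQ^T)=3\), giving \(3N/\binom N2=6/(N-1)\). Your Laplacian reformulation \(\operatorname{tr}(QLQ^T)\) with \(L=NI_N-\1\1^T\) is the same computation written in matrix form.
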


\begin{proof}
The condition \(Q\1=0\) gives \(\sum_i y_i=0\).  The standard identity
\[
\sum_{i<j}\|y_i-y_j\|^2=N\sum_i\|y_i\|^2-\left\|\sum_i y_i\right\|^2
\]
therefore gives
\[
\sum_{i<j}\|y_i-y_j\|^2=N\,\operatorname{tr}(QQ^T)=3N.
\]
Dividing by \(\binom N2\) yields \(6/(N-1)\).
\end{proof}

\begin{proposition}[Exact marginal distance law]\label{prop:beta-distance}
Let \(Q\) be Haar distributed on \(\St_3(H)\), let \(i\ne j\), and assume
\(N\ge5\).  Then
\[
   Z_{ij}:=\frac{\|Q(e_i-e_j)\|^2}{2}
   \sim \operatorname{Beta}\left(\frac32,\frac{N-4}{2}\right).
\]
Consequently
\[
 \Ebb\|Q(e_i-e_j)\|^2=\frac6{N-1},\qquad
 \operatorname{Var}\!\left(\|Q(e_i-e_j)\|^2\right)
 =\frac{24(N-4)}{(N-1)^2(N+1)}.
\]
For \(N=4\), \(Z_{ij}=1\) deterministically.
\end{proposition}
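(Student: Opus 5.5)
The plan is to reduce the statement to the classical fact that the squared length of the projection of a fixed unit vector onto a Haar-random \(k\)-dimensional subspace of \(\Rbb^n\) is \(\operatorname{Beta}(k/2,(n-k)/2)\).  The point is that \(\|Q v\|^2\) for a fixed \(v\in H\) depends only on \(\row(Q)\).  Since \(QQ^T=I_3\), the matrix \(Q^TQ\) is the orthogonal projection \(\Pi_{\row(Q)}\) onto the row space.  Hence
\[
\|Qv\|^2=v^TQ^TQv=\|\Pi_{\row(Q)}v\|^2 .
\]
Take \(v=(e_i-e_j)/\sqrt2\), a unit vector lying in \(H\).  Then \(Z_{ij}=\|\Pi_{\row(Q)}v\|^2\), and by the Grassmannian remark \(\row(Q)\) is Haar distributed on \(\Gr(3,H)\) with \(\dim H=N-1\).

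Next I would compute this law explicitly.  Identify \(H\cong\Rbb^{N-1}\) through an isometry \(B\), as in the proof of Proposition~\ref{prop:stiefel}.  Haar-invariance lets me swap roles: projecting a fixed unit vector onto a random \(3\)-plane has the same law as projecting a uniform random unit vector \(u\in S^{N-2}\) onto a fixed coordinate \(3\)-plane.  To see this, write the plane as \(O V_0\) with \(O\) Haar in \(O(N-1)\), and note that \(\|\Pi_{OV_0}v\|=\|\Pi_{V_0}O^Tv\|\).  Now represent \(u=g/\|g\|\) with \(g\sim\mathcal N(0,I_{N-1})\).  Then
\[
Z_{ij}=\frac{g_1^2+g_2^2+g_3^2}{g_1^2+\cdots+g_{N-1}^2}.
\]
This is \(\chi^2_3/(\chi^2_3+\chi^2_{N-4})\) with independent summands, hence \(\operatorname{Beta}(3/2,(N-4)/2)\).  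This requires \(N-4\ge1\), i.e.\ \(N\ge5\).

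The moments then follow by routine algebra.  For \(\operatorname{Beta}(a,b)\) the mean is \(a/(a+b)\) and the variance is \(ab/((a+b)^2(a+b+1))\).  With \(a=3/2\) and \(a+b=(N-1)/2\), we get \(\Ebb Z=3/(N-1)\), so \(\Ebb\|Q(e_i-e_j)\|^2=6/(N-1)\), consistent with Proposition~\ref{prop:haar-scale}.  Multiplying the variance of \(Z\) by \(4\) gives
\[
4\cdot\frac{(3/2)((N-4)/2)}{((N-1)/2)^2((N+1)/2)}=\frac{24(N-4)}{(N-1)^2(N+1)}.
\]
For \(N=4\) we have \(\dim H=3\), so \(\row(Q)=H\) and the projection is the identity on \(H\).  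Therefore \(Z_{ij}=\|v\|^2=1\).

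The only step needing care is the exchange of randomness between plane and vector.  It should be justified by the left-and-right invariance of Haar measure, using that \(Q\mapsto QO\) for \(O\in O(H)\) preserves Haar measure on \(\St_3(H)\).  This makes \(Q^TQ\) conjugation-invariant, so \(Z_{ij}\) has the same law as \(\|\Pi_{V_0}u\|^2\) for uniform \(u\) on the unit sphere of \(H\).
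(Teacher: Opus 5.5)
Your proposal is correct and follows the same route as the paper: you write \(Z_{ij}=\|Qv\|^2\) with \(v=(e_i-e_j)/\sqrt2\) a unit vector in \(H\), use rotational invariance to identify its law with \(\chi^2_3/(\chi^2_3+\chi^2_{N-4})\), and read off the beta moments. Your exchange of the random plane for a uniform random vector (via right \(O(H)\)-invariance of Haar measure) and the Gaussian normalization \(u=g/\|g\|\) spell out the one-line ``rotational invariance'' step the paper leaves implicit, and your moment computations and the \(N=4\) case check out.
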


\begin{proof}
The vector \(v=(e_i-e_j)/\sqrt2\) is a unit vector in the
\((N-1)\)-dimensional space \(H\).  The quantity \(\|Qv\|^2\) is the squared
length of the projection of \(v\) onto the Haar random three-plane
\(\row(Q)\).  Rotational invariance identifies its law with
\[
 \frac{X}{X+Y},\qquad X\sim\chi^2_3,\quad
 Y\sim\chi^2_{N-4},\quad X\ \hbox{and}\ Y\ \hbox{independent},
\]
which is the displayed beta distribution.  The formulas follow from the
standard beta moments.  If \(N=4\), the only three-plane in \(H\) is \(H\)
itself.
\end{proof}

\begin{remark}[The pair distances are not independent]
Proposition~\ref{prop:beta-distance} is a one-pair marginal statement.
Indeed Proposition~\ref{prop:haar-scale} gives the deterministic dependence
\[
   \sum_{i<j}\|Q(e_i-e_j)\|^2=3N.
\]
\end{remark}

\section{Exact crossings of the planar shadow}

Let \(C_N\) be the number of proper crossings between non--incident edges in
the planar polygon obtained from the first two coordinates of the
Gaussian model.  Equivalently, one may use the first two rows of a Haar
row-orthonormal projection: planar polar whitening is an invertible affine
map and therefore preserves every segment intersection.

\begin{lemma}[Four Gaussian points]\label{lem:four-gaussian-cross}
For four independent isotropic Gaussian points in \(\Rbb^2\), a prescribed
pair of disjoint segments crosses with probability
\[
    p_\times=\frac2\pi\arcsin(1/3).
\]
\end{lemma}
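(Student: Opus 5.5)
Let the four points be \(a,b,c,d\) i.i.d.\ \(\mathcal N_2(0,I_2)\), and let the prescribed segments be \([a,b]\) and \([c,d]\). Away from a null set, the segments cross properly exactly when \(c\) and \(d\) lie strictly on opposite sides of the line \(ab\), and \(a\) and \(b\) lie strictly on opposite sides of the line \(cd\). In signed-area terms, set
\[
D_1=\det(b-a,c-a),\quad D_2=\det(b-a,d-a),\quad D_3=\det(d-c,a-c),\quad D_4=\det(d-c,b-c).
\]
Crossing is the event \(\{D_1D_2<0,\ D_3D_4<0\}\). My plan is to reduce this to sign probabilities of a Gaussian vector and then apply the orthant formula \(\Pr(X>0,Y>0)=\tfrac14+\tfrac1{2\pi}\arcsin\rho\) for a centered bivariate normal with correlation \(\rho\).

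\textbf{Step 1: condition on the first segment.} Fix \(a\) and \(b\), and let \(u=b-a\). Then \(D_1=\det(u,c-a)\) and \(D_2=\det(u,d-a)\). Conditionally, these are jointly Gaussian with means \(-\det(u,a)\) and variances \(|u|^2\). Since they are not centered, direct conditioning is messy.

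\textbf{Step 2: use the midpoint decomposition instead.} Write
\[
m=\tfrac{a+b}2,\qquad u=\tfrac{b-a}2,\qquad n=\tfrac{c+d}2,\qquad w=\tfrac{d-c}2 .
\]
These four vectors are independent \(\mathcal N_2(0,\tfrac12 I_2)\). The segments cross iff \(m-n=su-tw\) for some \(s,t\in(-1,1)\). Since \(u\) and \(w\) form a.s.\ a basis, this says that the coordinates \((s,t)\) of \(m-n\) in the basis \((u,-w)\) satisfy \(|s|<1\) and \(|t|<1\).

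Conditionally on \(u\) and \(w\), the vector \(m-n\sim\mathcal N_2(0,I_2)\). By Cramer's rule,
\[
s=\frac{\det(m-n,-w)}{\det(u,-w)},\qquad t=\frac{\det(u,m-n)}{\det(u,-w)} .
\]
The crossing event is \(|\det(z,w)|<|\det(u,w)|\) and \(|\det(u,z)|<|\det(u,w)|\), with \(z=m-n\). Each condition compares two independent-type Gaussian quantities.

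\textbf{Step 3: reduce to an orthant computation.} Rescale so that \(z,u,w\) are all standard. The conditions become
\[
|\det(z,w)|<c\,|\det(u,w)|,\qquad |\det(u,z)|<c\,|\det(u,w)|,
\]
with \(c=1/\sqrt2\) after normalizing the variances. Equivalently, I would return to the four-sign formulation and compute \(\Pr(D_1D_2<0,\ D_3D_4<0)\) by symmetry. Write it as
\[
\tfrac14\,\Ebb\big[(1-\operatorname{sgn}D_1D_2)(1-\operatorname{sgn}D_3D_4)\big].
\]
The linear terms \(\Ebb\operatorname{sgn}(D_1D_2)\) have an explicit value: conditionally on \(a,b\), the quantities \(D_1\) and \(D_2\) are i.i.d., so \(\Ebb\operatorname{sgn}(D_1D_2)=\Ebb[(2\Phi(\cdot)-1)^2]\). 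The fourth-order term is the real difficulty.

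\textbf{The main obstacle} is evaluating the joint sign term. The expected route is to interpret crossing as a Gaussian orthant event in a suitable three-dimensional representation: conditionally on the directions, the relevant signs are those of linear forms in an isotropic Gaussian. Then a spherical-triangle (Schläfli-type) area formula yields arcsine terms, and the correlation \(1/3\) should emerge from the inner products of those forms. Concretely, I would check the following candidate: crossing means \(\{a,b,c,d\}\) are in convex position with \(a,b\) opposite. In that case the answer equals \(\tfrac13\Pr(\text{convex position})\), and for Gaussian points \(\Pr(\text{convex})=\tfrac{6}{\pi}\arcsin\tfrac13\) (a known result of the Sylvester type). This would give \(\tfrac2\pi\arcsin(1/3)\). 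So I would commit to this route: use symmetry over the three pairings, and reduce the problem to the Gaussian four-point convex-position probability, proving the latter via the sign of the orientation determinants as a Gaussian orthant probability.
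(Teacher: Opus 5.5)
Your closing reduction is correct and is the paper's own: $[a,b]$ and $[c,d]$ cross exactly when the four points are in convex position with $ab$ and $cd$ as diagonals, so exchangeability gives $p_\times=\frac13\Pr(\text{convex position})$. That reduction is the trivial part, though. The content of the lemma is the value $\Pr(\text{convex position})=\frac6\pi\arcsin\frac13$, and you only quote it as ``known'' and promise to prove it ``via the sign of the orientation determinants as a Gaussian orthant probability.'' That is not yet an argument. The orientation determinants are quadratic in the Gaussian coordinates, so their joint sign law is not visibly an orthant probability of any Gaussian vector. Your Steps 1--3 (conditioning on $a,b$, the midpoint/Cramer reformulation, the expansion of $\frac14\Ebb[(1-\operatorname{sgn}D_1D_2)(1-\operatorname{sgn}D_3D_4)]$) all stop at the joint term you yourself flag as the obstacle. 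Conditioning on edge directions, as you suggest, gives direction-dependent correlations that must still be integrated out, and nothing in the proposal says where a clean correlation $-1/3$ would come from.

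The missing idea is to work with the two coordinate rows instead of the four points.
\begin{itemize}
\item Augment the $2\times4$ coordinate matrix by the row $\mathbf 1^T$. Its kernel is spanned by the vector of signed $3\times3$ cofactors, which are, up to fixed signs, exactly your $D_4,D_3,D_2,D_1$. So your event $\{D_1D_2<0,\ D_3D_4<0\}$ is already a pair of sign cones for the affine dependence $\lambda$ of the four points.
\item The kernel is the line in $H_4=\mathbf 1^\perp$ orthogonal to the projections of the two coordinate rows onto $H_4$. These projections are i.i.d.\ isotropic Gaussian vectors in $H_4$, so the line is uniformly distributed.
\item Hence, up to an irrelevant global sign, the sign pattern of $\lambda$ has the law of that of $\Pi_{H_4}g$ with $g\sim\mathcal N(0,I_4)$. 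This is a Gaussian vector whose coordinates have pairwise correlation $-1/3$.
\item A $1$-against-$3$ split means one point lies in the triangle of the other three. A $2$-against-$2$ split means the same-sign pairs are crossing diagonals.
\item Since $\sum_i\lambda_i=0$, each of the eight $1$-against-$3$ cones is a trivariate orthant for three coordinates with correlation $-1/3$. Each has probability $\frac18-\frac3{4\pi}\arcsin\frac13$, so $\Pr(\text{convex position})=1-8q=\frac6\pi\arcsin\frac13$.
\end{itemize}
With this step supplied, your argument is complete. Alternatively, an explicit citation of the Gaussian Sylvester result also closes it, but that citation then carries essentially the whole lemma.
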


\begin{proof}
Augment the two coordinate rows of the four points by the all-ones row.
The kernel is almost surely a line in \(H_4=\1^\perp\), and that line is
uniform because the two centered Gaussian coordinate rows are independent
and isotropic in \(H_4\).  Choose a centered Gaussian representative
\(\lambda=(\lambda_1,\ldots,\lambda_4)\) of the line.  Its coordinates have
pairwise correlation \(-1/3\).

The four points are in convex position exactly when the signs of their
unique affine dependence split two against two.  The trivariate Gaussian
orthant formula
\[
 \Pr(Z_1,Z_2,Z_3>0)
 =\frac18+\frac1{4\pi}\sum_{a<b}\arcsin(\rho_{ab})
\]
shows that any prescribed one-against-three sign cone has probability
\[
 q=\frac18-\frac3{4\pi}\arcsin(1/3).
\]
There are eight such cones, so the convex-position probability is
\(1-8q=(6/\pi)\arcsin(1/3)\).  Conditional on convex position, exchangeability
makes each of the three pairings equally likely to be the diagonal pairing.
Dividing by three proves the claim.  This is the Gaussian instance of
Sylvester's four-point problem; see also \cite{FrickNewmanPegden}.
\end{proof}

\begin{theorem}[Exact mean and concentration]\label{thm:planar-crossings}
For every \(N\ge4\), under either the planar Gaussian or planar Haar model,
\[
 \Ebb C_N
 =\frac{N(N-3)}{\pi}\arcsin(1/3).
\]
Moreover, for every \(t>0\),
\[
 \Pr\!\left(\left|C_N-\Ebb C_N\right|\ge t\right)
 \le
 2\exp\!\left(-\frac{t^2}{2N(N-3)^2}\right).
\]
Equivalently, with \(L_N=N(N-3)/2\), for every \(\varepsilon\ge0\),
\[
 \Pr\!\left(\left|C_N/L_N-p_\times\right|\ge\varepsilon\right)
 \le 2e^{-\varepsilon^2N/8}.
\]
\end{theorem}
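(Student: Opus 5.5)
The plan is to prove both parts in the Gaussian model and then move them to the Haar model by an invariance argument. The mean is linearity of expectation over edge pairs. The tail bound is McDiarmid's bounded-differences inequality applied to the \(N\) independent Gaussian vertices.

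\emph{Transfer between models.} First I will check that \(C_N\) has the same law under both planar models. Write \(P_2\) for the first two rows of \(P\). Then \(C_N(P_2)=C_N(P_2\Pi_H)\), since centering is a translation. The polar factorization \(P_2\Pi_H=A_2U_2\), exactly as in Proposition~\ref{prop:stiefel} but with two rows, gives \(U_2\) Haar on the two-frame Stiefel manifold of \(H\). The first two rows of a Haar \(Q\in\St_3(H)\) have this same law. Because \(A_2\in GL(2,\Rbb)\) maps segments to segments and preserves proper crossings, \(C_N(P_2)=C_N(U_2)\) holds pointwise almost surely. So it suffices to work with i.i.d.\ isotropic Gaussian points \(x_1,\dots,x_N\in\Rbb^2\). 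The variance \(1/3\) is irrelevant here, since \(C_N\) is scale invariant.

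\emph{Mean.} I will write \(C_N=\sum_{\{a,b\}}\1[E_a\text{ crosses }E_b]\), summing over unordered pairs of non-incident edges of the \(N\)-cycle. Each edge \([x_a,x_{a+1}]\) is non-incident to exactly \(N-3\) edges: all \(N\) edges minus itself and its two neighbours. Hence there are \(L_N=N(N-3)/2\) such pairs. Each pair involves four distinct vertices, which are independent isotropic Gaussians. By Lemma~\ref{lem:four-gaussian-cross}, each indicator has expectation \(p_\times=\frac2\pi\arcsin(1/3)\). Summing gives
\[
\Ebb C_N=L_N\,p_\times=\frac{N(N-3)}{\pi}\arcsin(1/3).
\]
For \(N=4\) the same count \(L_4=2\) applies, so no special case is needed.

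\emph{Concentration.} I view \(C_N=f(x_1,\dots,x_N)\) as a function of the independent vertices, counting only proper crossings so that \(f\) is defined everywhere. Replacing \(x_i\) by \(x_i'\) changes only the indicators involving one of the two edges incident to \(i\), namely \([x_{i-1},x_i]\) and \([x_i,x_{i+1}]\). These two edges are adjacent, so they never form a counted pair together. Each of them lies in exactly \(N-3\) counted pairs. Hence \(|f(x)-f(x')|\le c_i:=2(N-3)\).

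McDiarmid's inequality then gives
\[
\Pr(|C_N-\Ebb C_N|\ge t)\le 2\exp\!\left(-\frac{2t^2}{\sum_i c_i^2}\right)=2\exp\!\left(-\frac{t^2}{2N(N-3)^2}\right).
\]
For the normalized form, I substitute \(t=\varepsilon L_N=\varepsilon N(N-3)/2\). The exponent becomes \(\varepsilon^2N^2(N-3)^2/\bigl(8N(N-3)^2\bigr)=\varepsilon^2N/8\). The case \(\varepsilon=0\) is trivial.

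\emph{Main point to check.} The step I expect to need the most care is the bounded-differences constant. The count of exactly \(N-3\) partners per edge, and the fact that the two edges at \(x_i\) are never a counted pair, must give \(2(N-3)\) with no overcounting. Measurability of \(f\) at degenerate configurations is only a null-set issue and does not affect the argument.
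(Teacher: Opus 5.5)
Your proposal is correct and follows the paper's proof essentially step for step. It uses linearity of expectation over the \(L_N=N(N-3)/2\) non-incident edge pairs with Lemma~\ref{lem:four-gaussian-cross}, McDiarmid's inequality with \(c_i=2(N-3)\) and the substitution \(t=\varepsilon L_N\), and transfer to the Haar model via the two-row polar factorization (the first two rows of a Haar frame in \(\St_3(H)\) are Haar on \(\St_2(H)\)); the only difference is that you carry out the model transfer first, whereas the paper does it last.
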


\begin{proof}
The cyclic \(N\)-gon has \(L_N=N(N-3)/2\) unordered pairs of non--incident
edges.  Each indicator has expectation \(p_\times\) by
Lemma~\ref{lem:four-gaussian-cross}, so linearity of expectation gives the
mean.  Changing one vertex can affect only indicators containing one of its
two incident edges, at most \(2(N-3)\) indicators.  McDiarmid's bounded
difference inequality \cite{McDiarmid} therefore gives
\[
 2\exp\!\left(
 -\frac{2t^2}{N[2(N-3)]^2}\right),
\]
which is the first tail bound; substituting \(t=\varepsilon L_N\) gives the
second.  Finally, the two-row polar decomposition couples the centered
Gaussian polygon to the Haar row-space representative by an invertible
linear map.  The first two rows of a Haar element of \(\St_3(H)\) have the
Haar law on \(\St_2(H)\), proving the Haar statement.
\end{proof}

\section{Computational experiments}

The companion \texttt{GaussianKnots} repository contains the experiments for the
three-dimensional model.  It samples the Hamiltonian cycle
\[
1\to2\to\cdots\to N\to1
\]
through projected simplex vertices, identifies the resulting stick knot with
\texttt{pyknotid} \cite{pyknotid}, and records both knot type and metric deformation.  Two
projection models
are compared:
\begin{itemize}
\item[1.] Gaussian model: sample \(P_{ab}\sim\mathcal N(0,1/3)\), center by
\(P\Pi_H\), and use the translated-equivalent vertices \(Pe_i\);
\item[2.] Haar row-space model: sample a Haar three-plane in \(H\) and choose
a row-orthonormal coordinate representative.
\end{itemize}
The implementation fixes the signs of the QR columns by a deterministic pivot
convention.  It therefore chooses a representative of a Haar row space rather
than a Haar frame on \(\St_3(H)\).  Every reported metric statistic and every
mirror-merged knot label is invariant under the resulting target \(O(3)\)
change, so the tables estimate the corresponding theoretical quantities.
In particular, for a chiral pair these tables estimate the merged probability
\(p_N(\{K,mK\})=p_N(K)+p_N(mK)\), not either handed probability separately.
For chiral labels one should instead use the positive-diagonal QR convention
(or apply an independent Haar element of \(O(3)\)).
By Proposition~\ref{prop:stiefel}, the two models have the same knot-type probabilities
\(p_N(K)\).  The Gaussian run also records the metric shear retained by \(P\Pi_H\);
this shear can affect diagram complexity and numerical conditioning, so ambiguous
classifications are reported separately.

For a fixed \(N\), the natural Monte Carlo estimator of the Stiefel probability is
\[
    \widehat p_N(K)
    =
    \frac{1}{M}\sum_{r=1}^M
    \mathbf 1\{\tau_N(Q_r)=K\},
\]
where \(Q_1,\dots,Q_M\) are independent Haar samples in \(\St_3(H)\).  We report the
sample size, unknown or ambiguous classifications, and Wilson binomial intervals for the
displayed rates.
If \(u\) of the \(M\) numerical classifications are unresolved and \(x\) are resolved
nontrivial knots, the data themselves only bracket the empirical nontrivial fraction by
\([x/M,(x+u)/M]\).  The displayed Wilson interval is for the recorded resolved-nontrivial
indicator; it is not a correction for classifier uncertainty.

\begin{figure}[ht]
\centering
\includegraphics[width=0.82\textwidth]{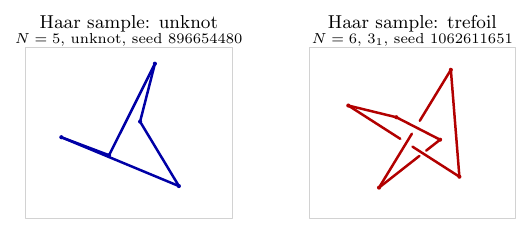}
\caption{Two fixed Haar row-orthonormal samples from the projected-simplex model, drawn as
generic planar diagrams with undercrossing gaps.  The left sample has \(N=5\) and is
unknotted by stick number.  The right sample has \(N=6\), the minimum possible stick number
for a trefoil, and was identified as \(3_1\).  The figure is reproducible from the
companion script.}
\label{fig:stiefel-examples}
\end{figure}

For each \(N=5,\dots,12\) we ran \(1000\) samples with seed \texttt{20260604}.  The knot
catalogue database was installed for \texttt{pyknotid}, and the Numba fast backend was
available.  Table~\ref{tab:knot-rates} gives the resulting nontrivial-knot rates.
The two Monte Carlo runs use matching per-sample seeds and are therefore coupled,
not independent.  They are still not expected to agree entrywise because their
generation maps differ;
Proposition~\ref{prop:stiefel} gives the same population knot-type law, with finite-sample
variation and classification ambiguity visible in the table.

\begin{table}[ht]
\centering
\scriptsize
\begin{tabular}{c|cc|cc}
\(N\) & Haar nontrivial & Haar 95\% CI & Gaussian nontrivial & Gaussian 95\% CI\\
\hline
5  & \(0/1000\)   & \(0.000\)--\(0.004\) & \(0/1000\)   & \(0.000\)--\(0.004\)\\
6  & \(2/1000\)   & \(0.001\)--\(0.007\) & \(5/1000\)   & \(0.002\)--\(0.012\)\\
7  & \(13/1000\)  & \(0.008\)--\(0.022\) & \(9/1000\)   & \(0.005\)--\(0.017\)\\
8  & \(26/1000\)  & \(0.018\)--\(0.038\) & \(39/1000\)  & \(0.029\)--\(0.053\)\\
9  & \(49/1000\)  & \(0.037\)--\(0.064\) & \(38/1000\)  & \(0.028\)--\(0.052\)\\
10 & \(66/1000\)  & \(0.052\)--\(0.083\) & \(84/1000\)  & \(0.068\)--\(0.103\)\\
11 & \(110/1000\) & \(0.092\)--\(0.131\) & \(104/1000\) & \(0.087\)--\(0.124\)\\
12 & \(133/1000\) & \(0.113\)--\(0.155\) & \(128/1000\) & \(0.109\)--\(0.150\)
\end{tabular}
\caption{Empirical resolved-nontrivial counts for the Haar row-space and Gaussian
models, with nominal Wilson 95\% binomial intervals for the displayed indicator.  The Haar run had
unknown or ambiguous classifications for \(1\) sample at \(N=10\), \(2\) samples at
\(N=11\), and \(1\) sample at \(N=12\).  The Gaussian run had unknown or ambiguous
classifications for \(1\) sample at each of \(N=9,10,11\), and \(11\) samples at \(N=12\).}
\label{tab:knot-rates}
\end{table}

\subsection{An exact sign classifier for six sticks}

For four distinct vertices define the alternating orientation sign
\[
 \chi(a,b,c,d)
 =
 \operatorname{sgn}\det(v_b-v_a,v_c-v_a,v_d-v_a).
\]
At a generic configuration the \(15=\binom64\) signs are all nonzero.
For five distinct vertices \(a,b,c,p,q\), the signed cofactors
\[
\begin{split}
(&\chi(b,c,p,q),-\chi(a,c,p,q),\chi(a,b,p,q),\\
 &-\chi(a,b,c,q),\chi(a,b,c,p))
\end{split}
\]
are the signs of the coefficients in their unique affine dependence.
It follows that the open segment \(pq\) pierces the relative interior of
the triangle \(abc\) exactly when the first three signs agree, the last
two agree, and those two common signs are opposite.  When this happens,
the oriented piercing contribution is \(\chi(a,b,c,q)\).  This is simply
the cofactor identity for the augmented \(4\times5\) vertex matrix, so the
test uses only the chirotope.

Let \(I(a,b,c;p,q)\in\{-1,0,1\}\) denote that oriented contribution, or
zero when there is no piercing, and put
\[
\begin{aligned}
 \Delta_2&=I(1,2,3;4,5)+I(1,2,3;5,6),\\
 \Delta_4&=I(3,4,5;6,1)+I(3,4,5;1,2),\\
 \Delta_6&=I(5,6,1;2,3)+I(5,6,1;3,4).
\end{aligned}
\]
Calvo proves for an embedded generic hexagon that all three values are
\(+1\) precisely for a right trefoil, all are \(-1\) precisely for a left
trefoil, and at least one is zero precisely for the unknot
\cite{Calvo}.  Thus the full \(15\)-sign vector determines the topological
knot type; it also determines Calvo's geometric curl through
\(\chi(1,2,3,5)\).  No connectedness assumption about an order-type
realization set is involved.

We applied this exact finite sign rule to the separate \(500{,}000\)-sample
\(N=6\) run, after grouping signatures by Hamiltonian \(D_6\)-symmetry and
global sign when only the mirror-merged label was required.  The rule found
one canonical trefoil signature,
\[
 \texttt{+-++-++--+-++-+},
\]
and hence
\[
    \widehat p_6(3_1)=1856/500000=0.003712,
    \qquad
    95\%\ {\rm CI}=0.003547\text{--}0.003884.
\]
The classifier also agreed on every one of the saved \(1000\) Haar and
\(1000\) Gaussian \(N=6\) samples with the separately computed \texttt{pyknotid}
label.  The rule is an exact classifier of a generic sign vector; the
recorded signs and the Monte Carlo frequency are, of course, numerical.
The Wilson intervals for the direct Haar and Gaussian counts in
Table~\ref{tab:knot-rates}, \(2/1000\) and \(5/1000\), are much wider and both overlap
\(0.003712\).  The direct runs are therefore consistent with the larger sign-classified run but too
small to sharpen the \(N=6\) estimate.  In particular, Calvo's theorem upgrades
the bucket labeling, not the Monte Carlo estimate into an exact chamber-volume formula.

\begin{table}[ht]
\centering
\scriptsize
\begin{tabular}{c|rrrrrrrrrr}
\(N\) & \(0_1\) & \(3_1\) & \(4_1\) & \(5_1\) & \(5_2\) & \(6_1\) & \(6_2\) & \(6_3\)
& other & unknown\\
\hline
5  & 1000 & 0   & 0  & 0 & 0 & 0 & 0 & 0 & 0 & 0\\
6  & 998  & 2   & 0  & 0 & 0 & 0 & 0 & 0 & 0 & 0\\
7  & 987  & 13  & 0  & 0 & 0 & 0 & 0 & 0 & 0 & 0\\
8  & 974  & 23  & 3  & 0 & 0 & 0 & 0 & 0 & 0 & 0\\
9  & 951  & 44  & 3  & 0 & 2 & 0 & 0 & 0 & 0 & 0\\
10 & 933  & 57  & 5  & 2 & 1 & 0 & 0 & 1 & 0 & 1\\
11 & 888  & 81  & 12 & 6 & 8 & 1 & 1 & 1 & 0 & 2\\
12 & 866  & 101 & 18 & 5 & 4 & 0 & 0 & 1 & 4 & 1
\end{tabular}
\caption{Haar row-orthonormal knot-type counts from the same \(1000\)-sample runs.  The
``other'' column records identified nontrivial labels outside the displayed
low-crossing types; the ``unknown'' column records ambiguous catalogue candidate lists or
unresolved classifications.}
\label{tab:haar-type-counts}
\end{table}

These data are not directly comparable to random-polygon or random-diagram ensembles.
Equilateral, Gaussian, confined, and quaternionic polygon models sample curves directly in
\(\Rbb^3\); grid and planar-diagram models start from planar combinatorics.  The Stiefel
model samples normalized linear shadows of a fixed simplex, and its planar diagrams inherit
crossing signs from a height coordinate.  Thus the induced shadow and crossing-sign laws
differ from the diagrammatic models cited above.

The support theorem gives a deterministic benchmark for finite sampling: every knot type
with \(\operatorname{stick}(K)\le N\) has \(p_N(K)>0\), while failure to observe it at
sample size \(M\) only says that its sector was not hit.

The runs also recorded distance deformation.  For all \(\binom N2\) vertex pairs, let
\[
D_{\rm pair}:=\frac{\max_{i<j}\|x_i-x_j\|}{\min_{i<j}\|x_i-x_j\|}.
\]
The original simplex distance is \(\sqrt2\).  By Lemma~\ref{lem:raw-distance-law}, the Gaussian normalization has expected squared pair distance \(2\).  By
Proposition~\ref{prop:haar-scale}, the row-orthonormal Haar model has all-pair RMS distance
\(\sqrt{2\cdot 3/(N-1)}\), so it requires the global rescaling factor
\(\sqrt{(N-1)/3}\) for metric comparison.  Table~\ref{tab:metric-deformation} reports mean
all-pair max/min distortion, mean RMS distance divided by \(\sqrt2\), and the mean
scale-free all-pair minimum and maximum after per-sample RMS normalization.

\begin{table}[ht]
\centering
\scriptsize
\begin{tabular}{c|ccc|ccc}
& \multicolumn{3}{c|}{Haar} & \multicolumn{3}{c}{Gaussian}\\
\(N\) & \(D_{\rm pair}\) & RMS\(/\sqrt2\) & norm.\ range
& \(D_{\rm pair}\) & RMS\(/\sqrt2\) & norm.\ range\\
\hline
5  & 2.448 & 0.866 & \(0.531\)--\(1.151\) & 4.335  & 0.982 & \(0.411\)--\(1.492\)\\
6  & 3.467 & 0.775 & \(0.422\)--\(1.267\) & 5.477  & 0.990 & \(0.342\)--\(1.578\)\\
7  & 4.524 & 0.707 & \(0.362\)--\(1.360\) & 6.270  & 0.988 & \(0.311\)--\(1.635\)\\
8  & 5.501 & 0.655 & \(0.319\)--\(1.436\) & 7.244  & 0.988 & \(0.282\)--\(1.693\)\\
9  & 6.384 & 0.612 & \(0.286\)--\(1.500\) & 8.464  & 0.978 & \(0.256\)--\(1.740\)\\
10 & 7.157 & 0.577 & \(0.260\)--\(1.552\) & 9.174  & 0.988 & \(0.236\)--\(1.781\)\\
11 & 8.031 & 0.548 & \(0.238\)--\(1.609\) & 9.958  & 0.997 & \(0.219\)--\(1.813\)\\
12 & 8.632 & 0.522 & \(0.230\)--\(1.648\) & 10.895 & 0.991 & \(0.206\)--\(1.853\)
\end{tabular}
\caption{Mean all-pair distance deformation over the same \(1000\) samples per \(N\).  The
``norm.\ range'' columns remove one global scale by dividing all projected pair distances in
a sample by their RMS.  In these runs the normalized ranges show moderate shear and no
near-collapse of the vertex set.}
\label{tab:metric-deformation}
\end{table}

\section{Formal verification}

The Lean~4/mathlib development at
\url{https://github.com/sashakolpakov/GaussianKnots} defines the Gaussian
product measure, proper-crossing statistic, vertex replacement, and
six-vertex determinant signature, and verifies the metric identities,
Gaussian centering invariance, \(2(N-3)\) sensitivity, and circuit/intersection
criterion.  From the complete four-point crossing-probability lemma as a typed
input, Lean derives the stated mean; from a typed Doob certificate it derives
both tail bounds by mathlib's Azuma--Hoeffding theorem; and it computes all six
Calvo records from one 15-determinant signature.  The four-point lemma and its
orthant argument, construction of the Doob certificate, polar/Wishart transfer,
Haar beta law, PL isotopy, and Calvo's topological classification remain
external.  The repository contains the declaration-level audit and build
instructions.

\section{Reproducibility}

The knot-type and distance-deformation experiments are maintained in the same
companion repository.
The corresponding Sphinx documentation is published at
\begin{center}
\url{https://sashakolpakov.github.io/GaussianKnots/}.
\end{center}
The main commands used for the tables and the \(N=6\) sign-classified run are
\begin{verbatim}
python3 scripts/run_knot_experiment.py \
  --projection-model haar \
  --vertices 5,6,7,8,9,10,11,12 \
  --samples 1000 \
  --seed 20260604 \
  --output-dir results/haar_N5-12_1000

python3 scripts/run_knot_experiment.py \
  --projection-model gaussian \
  --vertices 5,6,7,8,9,10,11,12 \
  --samples 1000 \
  --seed 20260604 \
  --output-dir results/gaussian_N5-12_1000

python3 scripts/repro/order_type_grouped_volume.py \
  --vertices 6,7,8 \
  --samples 500000 \
  --seed 20260604 \
  --classify-top-groups 200 \
  --checks-per-group 3 \
  --direct-classify-samples 1000 \
  --output-dir results/order_type_grouped_volume_N6-8_500k
\end{verbatim}
The exact-law checks and the Calvo reclassification are reproduced from this
note's repository by
\begin{verbatim}
python3 scripts/repro/validate_exact_projection_laws.py \
  --crossing-samples 10000 \
  --beta-samples 50000 \
  --batch-size 500

GaussianKnots/.venv/bin/python scripts/repro/calvo_n6_exact.py \
  --expected-trefoils 1856 \
  --validate-standard-direct \
  --audit-signs
\end{verbatim}
The optional stringent audit replays the complete seeded random-number stream,
including the reservoir-sampling draws, and recovers all \(155\) saved
six-vertex sign buckets with no count discrepancy.  The closest sampled
tetrahedral wall has scale-normalized Pl\"ucker margin
\(3.2516397907\cdot10^{-10}\); its affine determinant has magnitude
\(7.9648583145\cdot10^{-10}\), about \(7.96\) times the sign cutoff.
Recomputing the \(100\) closest cases with \(100\)-digit decimal arithmetic
produces no sign mismatch and no canonical-signature mismatch.  This is a
numerical robustness certificate for the finite sample, not a symbolic
evaluation of its Haar probability.
Those runs use the performance extra of the \texttt{pyknotid} fork, the installed catalogue
database, and the Numba fast backend when available.

\section*{Acknowledgments}

The authors used OpenAI's GPT-5.6-sol as a research and coding assistant to
help strengthen the results, improve the exposition, and generate and audit
the Python and Lean~4 code accompanying this paper.  The authors reviewed the
resulting mathematics and code and take responsibility for the final content.

\end{document}